\theoremstyle{plain}
\newtheorem{theorem}{Theorem}
\newtheorem{corollary}[theorem]{Corollary}
\newtheorem{lemma}[theorem]{Lemma}
\newtheorem{proposition}[theorem]{Proposition}
\theoremstyle{definition}
\newtheorem{remark}[theorem]{Remark}
\newtheorem{question}{Question}
\begin{document}
\baselineskip 18pt

\title[Fremlin tensor product behaves well with the unbounded order convergence]
      {Fremlin tensor product behaves well with the unbounded order convergence}

\author[O.~Zabeti]{Omid Zabeti}
\address[O.~Zabeti]
  {Department of Mathematics, Faculty of Mathematics, Statistics, and Computer science,
   University of Sistan and Baluchestan, Zahedan,
   P.O. Box 98135-674. Iran}
\email{o.zabeti@gmail.com}
\keywords{Fremlin tensor product, unbounded order convergence, order convergence, vector lattice.}
\subjclass[2020]{Primary:  46M05. Secondary:  46A40.}
\maketitle

\begin{abstract}
Suppose $\Sigma$ is a topological space and $S(\Sigma)$ is the vector lattice of all equivalent classes of continuous real-valued functions defined on open dense subsets of $\Sigma$. In this paper, we establish some lattice and topological aspects of $S(\Sigma)$. In particular, as an application,  we show that the unbounded order convergence and the order convergence are stable under passing to the Fremlin tensor product of two Archimedean vector lattices.
\end{abstract}

\date{\today}

\maketitle
\section{Introduction}
 Suppose $E$ and $F$ are Archimedean vector lattices. Fremlin in \cite{Fremlin:72} has constructed the Archimedean vector lattice $E\overline{\otimes}F$ (known as the Fremlin tensor product of two Archimedean vector lattices). Under this structure, the algebraic tensor product $E\otimes F$ can be considered as an ordered vector subspace of $E\overline{\otimes}F$ with some valuable density properties (see \cite[1A]{Fremlin:74} for details). There are various properties of $E$ and $F$ that can be or can not be inherited by the Fremlin tensor product. For example, the Fremlin tensor product behaves well with sublattices; that is if $E_0$ and $F_0$ are vector sublattices of  $E$ and $F$, respectively, then $E_0\overline{\otimes}F_0$ is a vector sublattice of $E\overline{\otimes}F$. Furthermore, if $E_0$ and $F_0$ are order dense or regular, then, so is the Fremlin tensor product $E_0\overline{\otimes}F_0$ (for more details, see\cite[Proposition 3.2]{Gr:23}). However, there are many properties that fail in the Fremlin tensor product even for some classical spaces. For example, order completeness can not be preserved by the Fremlin tensor product (\cite[Example 3.8]{Gr:23} or \cite[Example 4C]{Fremlin:74}). Moreover, the Fremlin tensor product does not respect ideals or bands in general (see \cite{BT:23} for more details). Since tensor products are a useful tool in studying other mathematical subjects, it is interesting in its own right, to see whether or not some properties of the underlying spaces can be inherited by the tensor product. One of the possible questions arises while we are dealing with different convergence notions. For example, it is known that the projective tensor norm on the projective tensor product of two Banach spaces is a cross-norm so that it preserves the norm convergence. However, it does not respect the weak convergence; it is shown in \cite[Example 2.10]{Ryan} that the diagonal of the projective tensor product $\ell_2 \widehat{\otimes}\ell_2$ is isometrically isomorphic to $\ell_1$. Note that by the diagonal, we mean the closed subspace of $\ell_2\widehat{\otimes} \ell_2$ generated by the elementary tensors $e_n\otimes e_n$, in which $(e_n)_{n \in \Bbb N}$ is the standard basis of $\ell_2$. To see this, observe that if $u=\Sigma_{n=1}^{\infty}\alpha_n e_n\otimes e_n$, then it can be verified that the projective norm $u$, denoted by $\pi(u)$, is equal to $\Sigma_{n=1}^{\infty}|\alpha_n|$ so that $D$ is isometrically isomorphism to $\ell_1$. Now consider the standard basis $(e_n)\subseteq \ell_2$ which is weakly null but $e_n\otimes e_n$ is not since $\pi(e_n\otimes e_n)=1$ and in $\ell_1$, weakly convergence and norm convergence agree. It is known that, in the case of a vector lattice, we have two convergence structures: order convergence and unbounded order convergence. Therefore, it would be interesting to consider the following question:
\begin{question}
Does the Fremlin tensor product of two vector lattices respect either order convergence or unbounded order convergence?
\end{question}
Grobler in \cite[Corollary 3.4]{Gr:23} considered a "separate" version answer to this question. More precisely, he proved that for Archimedean vector lattices $E$ and $F$, if $\sigma:E\times F\to E\overline{\otimes}F$ is the natural lattice bimorphism ($(x,y)\rightarrow x\otimes y$), then $\sigma$ is order ($uo$-continuous) on each component. In this note, we show that the Fremlin tensor product of two Archimedean vector lattices behaves well with both order convergence and unbounded order convergence in a general sense; this extends the result of Grobler to a "jointly" version, as well. Before we reach to this goal, we need to verify some lattice and topological structures for the vector lattice consisting of equivalent classes of all continuous real-valued functions defined on open dense subsets of a topological space $\Sigma$.  
\section{preliminaries}
First, we recall some preliminaries regarding different notions of unbounded convergences. Let $E$ be a vector lattice. For a net $(x_{\alpha})$ in $E$, if there is a net $(u_\gamma)$, possibly over a
different index set, with $u_\gamma \downarrow 0$ and for every $\gamma$ there exists $\alpha_0$ such
that $|x_{\alpha} - x| \leq u_\gamma$ whenever $\alpha \geq \alpha_0$, we say that $(x_\alpha)$ converges to $x$ in order, in notation, $x_\alpha \xrightarrow{o}x$. A net $(x_{\alpha})$ in $E$ is said to be unbounded order convergent ($uo$-convergent) to $x \in E$ if for each $u \in E_{+}$, the net $(|x_{\alpha} - x| \wedge u)$ converges to zero in order (for short, $x_{\alpha}\xrightarrow{uo}x$). For order bounded nets, these notions agree together. For more details on these topics and related notions, see \cite{GTX:17}. 
For undefined terminology and general theory of vector lattices, we refer the reader to \cite{AB1, AB}.

In this part, we recall some notes about the Fremlin tensor product between vector lattices. For more details, see \cite{Fremlin:72, Fremlin:74}. Furthermore, for a comprehensive, new and interesting reference, see \cite{Wickstead1:24}. Furthermore, for a short and nicely written exposition on different types of tensor products between Archimedean vector lattices, see \cite{Gr:23}.

Suppose $E$ and $F$ are Archimedean vector lattices. In 1972, Fremlin constructed a tensor product $E\overline{\otimes} F$ that is an Archimedean vector lattice with the following features:
\begin{itemize}
\item {The algebraic tensor product $E\otimes F$ is a vector subspace of $E\overline{\otimes}F$. Therefore, we conclude that it is an ordered vector space in its own right}.
\item{The vector sublattice in $E\overline{\otimes} F$ generated by the algebraic tensor product $E\otimes F$ is $E\overline{\otimes}F$.}
\item{For each Archimedean vector lattice $G$ and every lattice bimorphism $\Phi:E\times F\to G$, there is a unique lattice homomorphism $T:E\overline{\otimes}F\to G$ such that $T(x\otimes y)=\Phi(x,y)$ for all $x\in E$ and for all $y\in F$. }
\end{itemize}
Therefore, we can consider every element of $E\overline{\otimes}F$  as a finite supremum and finite infimum of some elements of $E\otimes F$. 
Now, assume that $E$ is a vector lattice. It is shown in \cite{Wickstead:24} that $E$ has a representation as a vector sublattice of some $S(\Sigma)$-space, where $\Sigma$ is a topological space and $S(\Sigma)$ is the vector lattice of all (equivalence classes) of continuous real-valued functions defined on open dense subsets of $\Sigma$ (we shall speak about this space more in the next section). Now, we recall \cite[Proposition 3.1]{BW:17} that is crucial for the rest of the paper.

Assume that $E$ and $F$ are vector lattices with the representations as vector sublattices of some $S(\Sigma)$ and $S(\Omega)$ spaces, respectively, where $\Sigma$ and $\Omega$ are topological spaces. Then the Fremlin tensor product $E\overline{\otimes}F$ is isomorphic to the vector sublattice of $S(\Sigma\times \Omega)$ generated by the functions $(\sigma,\gamma)\rightarrow x(\sigma)y(\gamma)$, for each $x\in E$ and for each $y\in F$. 
\section{main results}
Suppose $X$ is a topological space and consider the space $C^{\infty}(X)$ consisting of all continuous extended  real-valued functions that are finite except on a nowhere dense set. In general, this space need not be a vector space (see \cite[Section 2]{BW:17}). However, when $X$ is extremally disconnected, it is known that $C^{\infty}(X)$ is a vector lattice under the pointwise vector and order operations. Suppose $K_1$ and $K_2$ are compact Hausdorff spaces. It is known that $C(K_1)\otimes C(K_2)$ is norm (order) dense in $C(K_1\times K_2)$. On the other hand, by the known Kakutani's theorem (\cite[Theorem 4.21]{AB}), every Archimedean vector lattice with an order unit can be considered as an norm (order) vector sublattice of some $C(K)$-space ($K$ compact and Hausdorff). So, we can transfer problems regarding tensor product of Archimedean vector lattices with order units in terms of $C(K)$-spaces. However, the issue is that Archimedean vector lattices with order units are scarce in the category of all vector lattices. On the other hand, by the Maeda-Ogasawara theorem (\cite[Theorem 7.29]{AB1}), every Archimedean vector lattice can be considered as an order dense vector sublattice of some $C^{\infty}(X)$-space for some compact Hausdorff extremally disconnected topological space $X$. Now, if we want to develop a theory for tensor products in terms of $C^{\infty}(X)$-spaces, the problem is that the Cartesian product of two extremally disconnected topological spaces $X$ and $Y$, is not extremally disconnected, in general. So, $C^{\infty}(X\times Y)$ may not a vector lattice (see \cite[Example 3.8]{Gr:23}). This problem was remarkably solved by Buskes and Wickstead in 2017 (\cite{BW:17}) by introducing a new larger vector lattice.

Suppose $\Sigma$ is a topological space. By $S(\Sigma)$, we mean the space of all equivalence classes of continuous functions defined on open dense subsets of $\Sigma$ under the equivalence relation $f\thicksim g$ if they coincide on the intersection of their domains. This space is introduced by Buskes and Wickstead in \cite{BW:17}. It is a vector lattice under the pointwise lattice and vector operations.
Observe that any $f\in S(\Sigma)$ is in fact a class of functions; more precisely, $[f]=\{g\in S(\Sigma), f\thicksim  g\}$. Now, define $\phi$ from $C^{\infty}(\Sigma)$ into $S(\Sigma)$ defined via $\Phi(f)=[f]$; note that, in this case, we restrict every $f\in C^{\infty}(\Sigma)$ to the open dense subset whose image is real. $\Phi$ is clearly injective so that we may identify $C^{\infty}(\Sigma)$ with its image under $\Phi$ to think  $C^{\infty}(\Sigma)$  as a subset of $S(\Sigma)$. In particular, $C(\Sigma)$ can be treated as a subset (in fact, a vector sublattice) of $S(\Sigma)$. 


 Now, assume that $\Sigma$ is also extremally disconnected. Then $\Phi$ is a vector lattice homomorphism by the definition of vector and lattice operations in $S(\Sigma)$. Furthermore, $\Phi$ is onto by using \cite[Theorem 7.25]{AB1}. So, we have the following observation. 
\begin{lemma}\label{1}
Suppose $\Sigma$ is an extremally disconnected topological space. Then there exists a lattice isomorphism between $S(\Sigma)$ and $C^{\infty}(\Sigma)$.
\end{lemma}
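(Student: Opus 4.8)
The plan is to show that the map $\Phi\colon C^{\infty}(\Sigma)\to S(\Sigma)$, $\Phi(f)=[f]$, introduced above is a bijective lattice homomorphism; since the inverse of a bijective lattice homomorphism is again a lattice homomorphism, this delivers the asserted lattice isomorphism. First I would make well-definedness explicit: for $f\in C^{\infty}(\Sigma)$ the set $\{\sigma\in\Sigma\mid f(\sigma)=\pm\infty\}=f^{-1}(\{-\infty,+\infty\})$ is the preimage of a closed set under a continuous extended-real-valued map, hence closed, and it is nowhere dense by the definition of $C^{\infty}(\Sigma)$; so its complement $D_f$ is open and dense and $f|_{D_f}$ is a genuine continuous real-valued function, whence $[f]\in S(\Sigma)$. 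Injectivity, already noted above, is then immediate: if $\Phi(f)=\Phi(g)$ then $f$ and $g$ agree on the dense set $D_f\cap D_g$, and two continuous maps into the Hausdorff space $[-\infty,+\infty]$ that agree on a dense set coincide everywhere.

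Next I would record the homomorphism property, which is exactly where extremal disconnectedness of $\Sigma$ enters on the domain side. For extremally disconnected $\Sigma$ the space $C^{\infty}(\Sigma)$ is a vector lattice: the sum, scalar multiples, and lattice operations are computed pointwise on the open dense set where the arguments are finite and then taken to be the unique continuous $C^{\infty}$-extension of that pointwise expression. Since the operations in $S(\Sigma)$ are defined by precisely the same pointwise recipe on intersections of domains, $\Phi$ intertwines $+$, scalar multiplication, and $\vee,\wedge$, so it is a lattice homomorphism.

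The substantive step, and the one I expect to be the main obstacle, is surjectivity. Given $[h]\in S(\Sigma)$ represented by a continuous real-valued $h$ on an open dense $D\subseteq\Sigma$, I must produce $f\in C^{\infty}(\Sigma)$ with $\Phi(f)=[h]$: a continuous extended-real-valued function on all of $\Sigma$, finite off a nowhere dense set, restricting to $h$ on $D$. This is not a soft point-set fact but precisely the continuous-extension property characteristic of extremally disconnected spaces, and I would invoke \cite[Theorem 7.25]{AB1} to obtain the continuous extension $f\colon\Sigma\to[-\infty,+\infty]$ of $h$. Since $f$ restricts to the finite function $h$ on $D$, the set $\{f=\pm\infty\}$ is contained in the nowhere dense closed set $\Sigma\setminus D$, so $f\in C^{\infty}(\Sigma)$, and $[f]=[h]$ because $f$ and $h$ agree on the dense set $D$. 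With $\Phi$ established as a bijective lattice homomorphism, the lemma follows.
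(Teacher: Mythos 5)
Your proposal is correct and follows essentially the same route as the paper: the author also defines $\Phi(f)=[f]$ by restricting $f\in C^{\infty}(\Sigma)$ to the open dense set where it is finite, notes injectivity, observes that $\Phi$ is a lattice homomorphism because the operations in both spaces are given by the same pointwise recipe, and obtains surjectivity from \cite[Theorem 7.25]{AB1}. Your write-up merely spells out the well-definedness and density details that the paper leaves implicit.
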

Considering Lemma \ref{1} with \cite[Theorem 7.27]{AB1}, we have the following.
\begin{corollary}\label{77}
Suppose $\Sigma$ is an extremally disconnected topological space. Then, $S(\Sigma)$ is a universally complete vector lattice.
\end{corollary}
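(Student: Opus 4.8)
The plan is to transport universal completeness across the lattice isomorphism furnished by Lemma~\ref{1}. First I would recall that a vector lattice is \emph{universally complete} precisely when it is Dedekind complete and laterally complete, i.e.\ every family of pairwise disjoint positive elements admits a supremum. The key point for us is that this is a purely order-theoretic notion, phrased entirely in terms of suprema, infima, order boundedness, and disjointness.

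Next I would invoke Lemma~\ref{1} to fix a lattice isomorphism $\Phi\colon C^{\infty}(\Sigma)\to S(\Sigma)$. The substantive (and essentially only) observation is that universal completeness is an invariant of lattice isomorphisms: since $\Phi$ is a bijective linear map preserving the lattice operations, both $\Phi$ and $\Phi^{-1}$ carry suprema to suprema, infima to infima, and disjoint families to disjoint families. Consequently, a supremum of a disjoint positive family in one space exists if and only if its image has a supremum in the other, and Dedekind completeness likewise transfers. Thus $S(\Sigma)$ is universally complete as soon as $C^{\infty}(\Sigma)$ is.

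It then remains only to cite \cite[Theorem 7.27]{AB1}, which guarantees that $C^{\infty}(\Sigma)$ is a universally complete vector lattice for extremally disconnected $\Sigma$; combining this with the transfer above yields the claim. I do not anticipate any genuine obstacle here, since the content is already packaged in Lemma~\ref{1} together with \cite[Theorem 7.27]{AB1}. The only points deserving a moment's care are verifying that the hypothesis of the cited theorem matches the extremal disconnectedness assumed in the corollary, and confirming that universal completeness is indeed preserved under the isomorphism $\Phi$, both of which are routine.
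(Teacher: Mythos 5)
Your proposal matches the paper's argument exactly: the paper derives the corollary by combining Lemma~\ref{1} with \cite[Theorem 7.27]{AB1}, transferring universal completeness of $C^{\infty}(\Sigma)$ to $S(\Sigma)$ across the lattice isomorphism. Your additional remarks on why universal completeness is invariant under lattice isomorphisms are correct and simply make explicit what the paper leaves implicit.
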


Note that $C(\Sigma)$, the space of all real-valued continuous functions on topological space $\Sigma$, can be considered as a vector sublattice of $S(\Sigma)$. But, we have more if we consider completely regular topological spaces.
\begin{lemma}
Suppose $\Sigma$ is a completely regular topological space. Then, $C(\Sigma)$ is order dense in $S(\Sigma)$.
\end{lemma}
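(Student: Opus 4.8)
The plan is to verify the standard sandwiching characterization of order density: it suffices to show that for every $f\in S(\Sigma)$ with $f>0$ there is some $g\in C(\Sigma)$ with $0<g\le f$ in $S(\Sigma)$. Since $S(\Sigma)$ is Archimedean, this condition is equivalent to the requirement $f=\sup\{g\in C(\Sigma):0\le g\le f\}$, so it indeed yields order density (see \cite{AB}). So I would fix a positive nonzero $f$ and choose a representative of it, namely a continuous function defined on some open dense subset $U\subseteq\Sigma$, again denoted $f$. Because the class of $f$ is nonzero while $f\ge 0$ on $U$, there is a point $\sigma_0\in U$ with $f(\sigma_0)>0$.

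First I would localise. Put $\delta:=f(\sigma_0)/2>0$ and let $V:=\{\sigma\in U:f(\sigma)>\delta\}$. Since $f$ is continuous on the open set $U$, the set $V$ is open in $\Sigma$, contains $\sigma_0$, satisfies $V\subseteq U$, and $f>\delta$ throughout $V$. The purpose of this step is to produce an honest open neighbourhood of $\sigma_0$ on which $f$ is bounded away from $0$ by a fixed constant.

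Next I would use complete regularity to manufacture a globally defined continuous test function. Since $\sigma_0\in V$ and $V$ is open, complete regularity of $\Sigma$ yields a continuous $h:\Sigma\to[0,1]$ with $h(\sigma_0)=1$ and $h\equiv 0$ on $\Sigma\setminus V$. Setting $g:=\delta\,h\in C(\Sigma)$, we have $g\ge 0$ and $g(\sigma_0)=\delta>0$, so $g$ is a nonzero positive element of $S(\Sigma)$. To check $g\le f$ one compares the two functions on the intersection of their domains, which is $U$: for $\sigma\in V$ we have $g(\sigma)\le\delta<f(\sigma)$, while for $\sigma\in U\setminus V$ we have $h(\sigma)=0$, hence $g(\sigma)=0\le f(\sigma)$. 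Thus $0<g\le f$ in $S(\Sigma)$, which completes the argument.

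The one genuinely delicate point is the interplay between the local nature of $f$, which lives only on $U$, and the global continuity required of $g$, which must be defined on all of $\Sigma$. Complete regularity is exactly the hypothesis that resolves this tension: it guarantees enough globally continuous functions to build $h$ supported inside $V$, so that $g$ automatically vanishes where $f$ is merely nonnegative and stays below $\delta$ where $f$ exceeds $\delta$. Without a separation axiom of this strength there need not be any nonzero global continuous function dominated by $f$, so this is precisely where the hypothesis is indispensable.
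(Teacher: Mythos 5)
Your proof is correct and follows essentially the same route as the paper's: localise to an open set where the representative exceeds a fixed positive constant, then use complete regularity to build a globally continuous bump function supported there and scaled below that constant. If anything, your version is slightly more careful with the scaling constant ($\delta$ versus the paper's $r$), which makes the final comparison $g\le f$ cleaner.
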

\begin{proof}
Suppose $0\neq g\in {S(\Sigma)}_{+}$. So, there exists some $t\in Dom(g)$ such that $g(t)=r>0$. Since $g$ is continuous, there exists a non-empty open set $U\subseteq \Sigma$ containing $t$ such that $g(s)> \frac{r}{2}$ for each $s\in U\cap Dom(g)$. Since $\Sigma$ is completely regular, there is $0\neq f\in {C(\Sigma)}_{+}$ such that $f\leq r\textbf{1}$ and $f\equiv 0$ outside of $U$. So, it is easily seen that $0<f\leq g$.
\end{proof}

Now, we characterize $uo$-convergence in $S(\Sigma)$. The proof is similar to the proof of \cite[Theorem 7.1]{B:23}; see also \cite[Theorem 3.2]{BTr:23}.
\begin{lemma}\label{444}
Suppose $\Sigma$  is a completely regular topological space. For a net $(f_{\alpha})\subseteq {S(\Sigma)}$, $f_{\alpha}\xrightarrow{uo}0$ if and only if for each non-empty open set $U\subseteq \Sigma$ and for each $\varepsilon>0$, there exist a non-empty open set $V\subseteq U$ and an index $\alpha_0$ such that $|f_{\alpha}(t)|\leq\varepsilon$ for each $t\in V\cap Dom(f_{\alpha})$.
\end{lemma}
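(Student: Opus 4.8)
The plan is to reduce everything to one structural fact about decreasing nets in $S(\Sigma)$: for a net $(w_\gamma)$ in $S(\Sigma)_+$, one has $\inf_\gamma w_\gamma=0$ if and only if for every nonempty open $U\subseteq\Sigma$ and every $\delta>0$ there are a nonempty open $V\subseteq U$ and an index $\gamma_0$ with $w_{\gamma_0}\le\delta$ on $V\cap\mathrm{Dom}(w_{\gamma_0})$. The nontrivial implication is proved exactly as the preceding order-density lemma: a nonzero positive lower bound $h$ of $(w_\gamma)$ exists precisely when some nonempty open $V$ and some $\delta>0$ satisfy $w_\gamma\ge\delta$ on $V\cap\mathrm{Dom}(w_\gamma)$ for all $\gamma$, since from such a pair complete regularity supplies a nonzero continuous bump $0\le h\le\delta$ supported in $V$, and conversely continuity propagates $h\le w_\gamma$ from the dense common domain to all of $V\cap\mathrm{Dom}(w_\gamma)$. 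I emphasize that this argument is purely order-theoretic and needs no Baire category, precisely because the infimum is read off bump functions rather than from a possibly ill-behaved pointwise infimum over the net.

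For the forward implication, assume $f_\alpha\xrightarrow{uo}0$ and fix a nonempty open $U$ and $\varepsilon>0$. Choosing a point $t_0\in U$, complete regularity yields $u\in C(\Sigma)_+\subseteq S(\Sigma)_+$ with $u(t_0)=2\varepsilon$ and $u\equiv0$ off $U$, so that $W:=\{u>\varepsilon\}$ is a nonempty open subset of $U$. By hypothesis $|f_\alpha|\wedge u\xrightarrow{o}0$, so there is $(w_\gamma)\downarrow0$ dominating the tails of $|f_\alpha|\wedge u$; applying the structural fact to $W$ and level $\varepsilon$ produces a nonempty open $V\subseteq W$ and an index $\gamma_0$ with $w_{\gamma_0}\le\varepsilon$ there. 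For $\alpha$ beyond the threshold attached to $\gamma_0$ we get $|f_\alpha|\wedge u\le\varepsilon$ on a dense open subset of $V$, and since $u>\varepsilon$ on $V$ this forces $|f_\alpha|\le\varepsilon$ there; continuity of $f_\alpha$ then upgrades the inequality to all of $V\cap\mathrm{Dom}(f_\alpha)$, which is the desired conclusion.

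For the converse the key simplification is that each test net $g_\alpha:=|f_\alpha|\wedge u$ is order bounded by $u$, so it suffices to build a single net $(w_\delta)\downarrow0$ dominating its tails. Given $\varepsilon>0$, a Zorn's lemma argument produces a maximal pairwise disjoint family of nonempty \emph{regular} open sets on each of which some tail of $g_\alpha$ stays $\le\varepsilon$; the topological hypothesis is exactly what feeds this family, and maximality forces its union to be dense open. I would index $(w_\delta)$ by triples consisting of a level $\varepsilon$, a finite subfamily, and an index past the finitely many thresholds involved, and set $w_\delta$ equal to $u\wedge\varepsilon$ on the chosen finite union and to $u$ on its exterior, deleting the boundary (nowhere dense, by regularity) so that $w_\delta$ is a genuine element of $S(\Sigma)$. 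Then $w_\delta$ dominates the relevant tail of $g_\alpha$ and decreases along the directed set, and the structural fact gives $\inf_\delta w_\delta=0$, whence $g_\alpha\xrightarrow{o}0$ and $f_\alpha\xrightarrow{uo}0$. The main obstacle is precisely this construction: since $S(\Sigma)$ need not be order complete for merely completely regular $\Sigma$ one cannot take $\sup_{\alpha\ge\beta}g_\alpha$ directly, so the dominating net must be assembled by hand with care about boundaries and about controlling finitely many thresholds at once. An alternative that sidesteps this is to pass to the universal completion $S(\Sigma)^u\cong C^\infty(X)$ with $X$ extremally disconnected, where suprema exist; because $g_\alpha$ is order bounded, order and $uo$-convergence coincide for it and $uo$-convergence is inherited from this regular overlattice, so it would suffice to verify the pointwise smallness in $C^\infty(X)$, at the cost of transporting the open-set condition along the representation.
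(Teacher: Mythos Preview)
Your approach and the paper's are genuinely different. The paper does not work from the definition of order convergence at all: both directions are obtained by invoking Bilokopytov's disjointness characterization of $uo$-convergence (\cite[Theorem 6.2]{B:23}), namely that $f_\alpha\xrightarrow{uo}0$ iff for every $h>0$ there exist $g>0$ and $\alpha_0$ with $(|f_\alpha|-h)^+\perp g$ for $\alpha\ge\alpha_0$. In $S(\Sigma)$ this disjointness reads ``$(|f_\alpha|-h)^+$ vanishes on the support of $g$'', and complete regularity supplies the bump functions needed to pass back and forth between that and the open-set condition. So the paper's proof is two short paragraphs, at the price of importing a nontrivial external result. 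Your argument is more self-contained: you isolate the order-theoretic content as a characterization of $\inf_\gamma w_\gamma=0$ for decreasing nets in $S(\Sigma)_+$, and run the forward direction through it cleanly.

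The backward direction, however, has a real gap in the construction of the dominating net. You index $(w_\delta)$ by triples $(\varepsilon,F,\alpha_0)$ with $F$ a finite subfamily of a maximal disjoint family $\mathcal F_\varepsilon$ produced by Zorn, but the families $\mathcal F_\varepsilon$ for different $\varepsilon$ are chosen independently and need not be comparable. Given $(\varepsilon_1,F_1,\alpha_1)$ and $(\varepsilon_2,F_2,\alpha_2)$ with $\varepsilon_1\ne\varepsilon_2$, there is no triple $(\varepsilon_3,F_3,\alpha_3)$ in your index set with $w_{\delta_3}\le w_{\delta_1}\wedge w_{\delta_2}$, because $F_3$ must sit inside $\mathcal F_{\varepsilon_3}$, which bears no relation to $\mathcal F_{\varepsilon_1}$ or $\mathcal F_{\varepsilon_2}$. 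So the index set is not directed in any order that makes $(w_\delta)$ decreasing, and you do not yet have $w_\delta\downarrow 0$. This is fixable: either arrange the maximal families to be nested (choose $\mathcal F_\varepsilon\supseteq\mathcal F_{\varepsilon'}$ whenever $\varepsilon>\varepsilon'$, which is possible since a set eligible at level $\varepsilon'$ is eligible at level $\varepsilon$), or drop maximality altogether and index by \emph{all} finite pairwise-disjoint families of regular open sets equipped with individual levels and thresholds, ordered by refinement. Either repair makes your structural fact apply and the argument goes through; but as written the net is not decreasing. The universal-completion alternative you mention is also viable and closer in spirit to how the paper handles things elsewhere, though you are right that transporting the open-set condition along the Stone representation is its own lemma.
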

\begin{proof}
Suppose $f_{\alpha}\xrightarrow{uo}0$.  Suppose $U\subseteq \Sigma$ is a non-empty open set and $\varepsilon>0$ is arbitrary. Since $\Sigma$ is completely regular, there exists a non-zero positive continuous function $g$ on $\Sigma$ with $g\leq \textbf{1}_{U}$. By \cite[Theorem 6.2]{B:23}, we can find non-zero positive function $h\in S(\Sigma)$ and an $\alpha_0$ such that $(|f_{\alpha}|-\varepsilon h)^{+}\perp g$ for each $\alpha\geq \alpha_0$. Therefore, $(|f_{\alpha}|-\varepsilon h)^{+}$ vanishes on non-empty open subset $V:=supp (g)\subseteq U$. Thus, for each $x\in V\cap Dom(f_{\alpha})$ and for each $\alpha\geq \alpha_0$, $|f_{\alpha}(x)|\leq \varepsilon$.  

For the other direction, take any non-zero positive $h\in S(\Sigma)$. There are non-empty open set $U\subseteq \Sigma$ and $\varepsilon>0$ with $h\geq \varepsilon \textbf{1}_{U}$. By the assumption, there are non-empty open set $V\subseteq U$ and an $\alpha_0$ such that $|f_{\alpha}(x)|\leq \varepsilon$ for each $x\in V\cap Dom(f_{\alpha})$ and for each $\alpha\geq \alpha_0$. Since $\Sigma$ is  completely regular,  there is a non-zero positive continuous function $g$ with $g\leq \varepsilon \textbf{1}_{U}$. Now, we can see that $(|f_{\alpha}|-h)^{+}$ vanishes on $V$ so that disjoint with $g$. Again, using \cite[Theorem 6.2]{B:23}, convinces us that $f_{\alpha}\xrightarrow{uo}0$. 
\end{proof}

Moreover, we have the following standard facts. We present the proof for the sake of completeness.
\begin{lemma}\label{2}
Suppose $\Sigma$ and $\Omega$ are completely regular topological spaces. Then, $S(\Sigma)\overline{\otimes}S(\Omega)$ is order dense in $S(\Sigma\times \Omega)$.
\end{lemma}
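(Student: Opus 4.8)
The plan is to use the representation of the Fremlin tensor product supplied by the cited Proposition 3.1 of \cite{BW:17}, which identifies $S(\Sigma)\overline{\otimes}S(\Omega)$ with the vector sublattice $D$ of $S(\Sigma\times\Omega)$ generated by the elementary tensors $(\sigma,\gamma)\mapsto x(\sigma)y(\gamma)$, where $x\in S(\Sigma)$ and $y\in S(\Omega)$. Order density then follows from the standard criterion already exploited in the previous lemma: a vector sublattice $D$ of an Archimedean vector lattice is order dense provided that for every $0\neq g$ in the positive cone there is some $h\in D$ with $0<h\leq g$. Since every elementary tensor already lies in $D$, it suffices to produce such an $h$ of the simplest possible shape, namely a single product $f_1\otimes f_2$ with $f_1\in C(\Sigma)_+$ and $f_2\in C(\Omega)_+$.

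First I would fix $0\neq g\in {S(\Sigma\times\Omega)}_{+}$ and choose a point $(s,t)\in Dom(g)$ with $g(s,t)=r>0$. Because $Dom(g)$ is open in $\Sigma\times\Omega$ and $g$ is continuous there, continuity furnishes an open set $W\subseteq Dom(g)$ containing $(s,t)$ on which $g>\frac{r}{2}$. Using that $\Sigma\times\Omega$ carries the product topology, I would then shrink $W$ to a basic open box $U\times V$ with $(s,t)\in U\times V\subseteq W$, where $U\subseteq\Sigma$ and $V\subseteq\Omega$ are non-empty open sets.

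Next, exactly as in the previous lemma, I would invoke complete regularity of $\Sigma$ and of $\Omega$ separately. Setting $c=\sqrt{r/2}$, complete regularity of $\Sigma$ yields $0\neq f_1\in {C(\Sigma)}_{+}$ with $f_1\leq c\textbf{1}$, $f_1(s)>0$, and $f_1\equiv 0$ outside $U$; similarly complete regularity of $\Omega$ yields $0\neq f_2\in {C(\Omega)}_{+}$ with $f_2\leq c\textbf{1}$, $f_2(t)>0$, and $f_2\equiv 0$ outside $V$. Let $h$ be the elementary tensor $(\sigma,\gamma)\mapsto f_1(\sigma)f_2(\gamma)$, which is continuous on all of $\Sigma\times\Omega$ and hence a genuine element of $D\subseteq S(\Sigma\times\Omega)$. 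Then $h\geq 0$ everywhere and $h(s,t)=f_1(s)f_2(t)>0$, so $h>0$. To check $h\leq g$ I would compare the two on $Dom(g)$, the intersection of their domains: whenever $(\sigma,\gamma)\notin U\times V$ one of $f_1(\sigma),f_2(\gamma)$ vanishes, so $h(\sigma,\gamma)=0\leq g(\sigma,\gamma)$; whenever $(\sigma,\gamma)\in (U\times V)\cap Dom(g)\subseteq W$ we have $h(\sigma,\gamma)=f_1(\sigma)f_2(\gamma)\leq c^2=\frac{r}{2}<g(\sigma,\gamma)$. Hence $0<h\leq g$ in $S(\Sigma\times\Omega)$, which verifies the criterion.

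I do not anticipate a genuine obstacle here; the content is essentially a product-topology refinement of the previous lemma combined with the Buskes--Wickstead representation. The only point demanding care is the bookkeeping of domains: $g$ is defined only on an open dense subset, so every comparison must be carried out on $Dom(g)$, and one must confirm that the box $U\times V$ meets $Dom(g)$ in a non-empty open set — which it does, being an open neighbourhood of the point $(s,t)\in Dom(g)$ with $Dom(g)$ open. Retaining the normalisation $c=\sqrt{r/2}$, so that $f_1f_2\leq \frac{r}{2}$ on the box, is precisely what forces the domination $h\leq g$.
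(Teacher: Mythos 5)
Your argument is correct and follows essentially the same route as the paper's own proof: pick a point where $g$ exceeds some $r>0$, shrink to a basic box $U\times V$ on which $g>\frac{r}{2}$, use complete regularity of each factor to build $f_1, f_2$ bounded by $\sqrt{r/2}$ and supported in $U$, $V$ respectively, and verify $0<f_1\otimes f_2\leq g$ by the same case split. Your additional bookkeeping of domains and the explicit appeal to the Buskes--Wickstead representation are sound refinements of the paper's argument rather than a different method.
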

\begin{proof}
Suppose $h\in {S(\Sigma\times \Omega)}_{+}$. There exists $(t_0,s_0)\in \Sigma\times \Omega$ such that $h(t_0,s_0)=r>0$. Since $h$ is continuous, there exist non-empty open sets $U\subseteq \Sigma$ and $V \subseteq \Omega$ such that $h(t,s)> \frac{r}{2}$ for all $(t,s)\in U\times V$. Since $\Sigma$ and $\Omega$ are completely regular, we can find $0\neq f\in {S(\Sigma)}_{+}$ such that $f\leq \sqrt{\frac{r}{2}} {\mathbf{1}}_{S(\Sigma)}$ and $f$ vanishes outside of $U$. Also, there is $0\neq g\in {S(\Omega)}_{+}$ with $g\leq \sqrt{\frac{r}{2}} {\mathbf{1}}_{S(\Omega)}$ and $g=0$ outside of $V$. We show that $0<f\otimes g\leq h$. Since both $f$ and $g$ are non-zero, $f\otimes g>0$. Suppose $(t,s)\in Dom(f)\cap Dom(g)\cap Dom(h)$. First assume that $(t,s)\in U\times V$. Then, $(f\otimes g)(t,s)=f(t)g(s)\leq \frac{r}{2}< h(t,s)$. If $(t,s)\notin U\times V$, then either $f(t)=0$ or $g(s)=0$. Therefore, $(f\otimes g)(t,s)=f(t)g(s)=0\leq h(t,s)$. 
\end{proof}
The proof of the following result follows from \cite[Proposition 3.2]{Gr:23}. 
\begin{lemma}\label{3}
Suppose $\Sigma$ and $\Omega$ are topological spaces. Furthermore, assume that $E$ is an order dense vector sublattice of $S(\Sigma)$ and $F$ is an order dense vector sublattice of $S(\Omega)$. Then, $E\overline{\otimes}F$ is order dense in $S(\Sigma)\overline{\otimes}S(\Omega)$.
\end{lemma}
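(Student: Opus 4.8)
The plan is to deduce the statement as a direct instance of \cite[Proposition 3.2]{Gr:23}, the result already invoked in the introduction, which asserts that the Fremlin tensor product preserves order density: whenever $E_0$ is an order dense vector sublattice of an Archimedean vector lattice $G$ and $F_0$ is an order dense vector sublattice of an Archimedean vector lattice $H$, the Fremlin tensor product $E_0\overline{\otimes}F_0$ is an order dense vector sublattice of $G\overline{\otimes}H$. The entire content of the lemma is the specialization of this to the case where the ambient lattices are $S(\Sigma)$ and $S(\Omega)$.

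First I would record that the two ambient lattices are Archimedean vector lattices, so that $S(\Sigma)\overline{\otimes}S(\Omega)$ is well defined; both $S(\Sigma)$ and $S(\Omega)$ are Archimedean, being constructed from continuous real-valued functions on open dense subsets under pointwise operations. Then I would set $G=S(\Sigma)$ and $H=S(\Omega)$, and take $E_0=E$, $F_0=F$. By hypothesis, $E$ is an order dense vector sublattice of $S(\Sigma)$ and $F$ is an order dense vector sublattice of $S(\Omega)$, so the hypotheses of \cite[Proposition 3.2]{Gr:23} are satisfied verbatim. Applying that proposition with these identifications yields at once that $E\overline{\otimes}F$ is order dense in $S(\Sigma)\overline{\otimes}S(\Omega)$, which is exactly the assertion.

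The only point that warrants a moment's care — and the step I expect to be the mild obstacle — is to align the two roles that the Fremlin construction plays here: one must ensure that $E\overline{\otimes}F$ genuinely embeds as a vector sublattice of $S(\Sigma)\overline{\otimes}S(\Omega)$ (so that the phrase ``order dense in'' is meaningful), and that the order density passes through the tensor product jointly, rather than merely holding in each factor separately. Both of these facts are precisely what \cite[Proposition 3.2]{Gr:23} packages together — the sublattice statement, that $E_0\overline{\otimes}F_0$ is a vector sublattice of $G\overline{\otimes}H$, and the order density statement — so no argument beyond invoking that result is required, and the proof reduces to verifying the hypotheses and citing it.
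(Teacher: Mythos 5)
Your proposal matches the paper exactly: the paper gives no separate argument and simply states that the lemma ``follows from \cite[Proposition 3.2]{Gr:23},'' which is precisely the specialization you carry out. Your verification of the hypotheses and your remark about the sublattice embedding being part of the cited proposition are correct and add nothing beyond what the paper intends.
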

\begin{theorem}\label{101}
Suppose $E$ and $F$ are Archimedean vector lattices. Moreover, assume that $f_{\alpha}\xrightarrow{uo}f$ in $E$ and $g_{\beta}\xrightarrow{uo}g$ in $F$. Then, $f_{\alpha}\otimes g_{\beta}\rightarrow f\otimes g$ in the Fremlin tensor product $E\overline{\otimes}F$.
\end{theorem}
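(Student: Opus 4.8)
The plan is to transport everything into a concrete function-lattice picture and then verify the convergence pointwise by means of the open-set criterion of Lemma \ref{444}. First I would invoke the Maeda--Ogasawara representation together with Lemma \ref{1} to regard $E$ as an order dense vector sublattice of some $S(\Sigma)$ and $F$ as an order dense vector sublattice of some $S(\Omega)$, where $\Sigma$ and $\Omega$ are (compact Hausdorff extremally disconnected, hence completely regular) spaces. Because $uo$-convergence passes freely between a vector lattice and its order dense sublattices (see \cite{GTX:17}), the hypotheses then give $f_\alpha\xrightarrow{uo}f$ in $S(\Sigma)$ and $g_\beta\xrightarrow{uo}g$ in $S(\Omega)$. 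Using \cite[Proposition 3.1]{BW:17}, I would identify $E\overline{\otimes}F$ with the vector sublattice of $S(\Sigma\times\Omega)$ generated by the functions $(\sigma,\gamma)\mapsto x(\sigma)y(\gamma)$; under this identification $f_\alpha\otimes g_\beta$ and $f\otimes g$ become the pointwise products $f_\alpha(\sigma)g_\beta(\gamma)$ and $f(\sigma)g(\gamma)$. Combining Lemma \ref{3} with Lemma \ref{2} shows that $E\overline{\otimes}F$ is order dense in $S(\Sigma\times\Omega)$, so, appealing once more to the order-density transfer of $uo$-convergence, it suffices to prove that the double net $f_\alpha\otimes g_\beta$ is $uo$-convergent to $f\otimes g$ in $S(\Sigma\times\Omega)$.

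Next I would apply Lemma \ref{444} on $\Sigma\times\Omega$ to the difference $f_\alpha\otimes g_\beta-f\otimes g$. Fix a non-empty open set $W\subseteq\Sigma\times\Omega$ and $\varepsilon>0$. Since $\mathrm{Dom}(f)\times\mathrm{Dom}(g)$ is open dense, I can choose a point of it lying in $W$ and, using continuity of $f$ and $g$ on their domains, shrink to a basic rectangle $U\times T\subseteq W$ with $U\subseteq\mathrm{Dom}(f)$ and $T\subseteq\mathrm{Dom}(g)$, on which $f$ is bounded by some $M_f$ and $g$ by some $M_g$. I would then apply Lemma \ref{444} to $f_\alpha-f$ on $U$ with a tolerance $\delta_1$ to obtain a non-empty open $V\subseteq U$ and an index $\alpha_0$ with $\abs{f_\alpha-f}\le\delta_1$ on $V\cap\mathrm{Dom}(f_\alpha)$ for $\alpha\ge\alpha_0$, and likewise apply it to $g_\beta-g$ on $T$ with tolerance $\delta_2$ to obtain a non-empty open $T'\subseteq T$ and $\beta_0$ with $\abs{g_\beta-g}\le\delta_2$ on $T'\cap\mathrm{Dom}(g_\beta)$ for $\beta\ge\beta_0$. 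The set witnessing the criterion would be $W':=V\times T'\subseteq W$.

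Finally, for $(\sigma,\gamma)\in W'$ in the common domain and $(\alpha,\beta)\ge(\alpha_0,\beta_0)$, I would estimate
\[
\bigabs{f_\alpha(\sigma)g_\beta(\gamma)-f(\sigma)g(\gamma)}\le\abs{f_\alpha(\sigma)-f(\sigma)}\,\abs{g_\beta(\gamma)}+\abs{f(\sigma)}\,\abs{g_\beta(\gamma)-g(\gamma)}\le\delta_1(M_g+\delta_2)+M_f\delta_2,
\]
using that $\abs{g_\beta(\gamma)}\le M_g+\delta_2$ on $T'$. Choosing $\delta_1,\delta_2$ small in terms of $M_f$, $M_g$ and $\varepsilon$ makes the right-hand side $\le\varepsilon$, which is precisely the condition of Lemma \ref{444}; hence $f_\alpha\otimes g_\beta\xrightarrow{uo}f\otimes g$ in $S(\Sigma\times\Omega)$, and therefore in $E\overline{\otimes}F$.

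I expect the main obstacle to be the unboundedness inherent in $uo$-convergence: the naive telescoping $f_\alpha\otimes g_\beta-f\otimes g=(f_\alpha-f)\otimes g_\beta+f\otimes(g_\beta-g)$ cannot be controlled in order, because $g_\beta$ need not be order bounded, so the cross term resists a direct estimate. The device that resolves this is exactly the passage to $S(\Sigma\times\Omega)$ and the use of the purely local open-set criterion of Lemma \ref{444}: on a sufficiently small rectangle the limit $g$ is bounded, and $uo$-convergence then forces $g_\beta$ to be eventually bounded there as well, which is what renders the cross term $(f_\alpha-f)g_\beta$ estimable. Care is also needed to ensure the chosen rectangle lies inside the relevant domains, so that every pointwise evaluation and the boundedness of $f$ and $g$ are legitimate.
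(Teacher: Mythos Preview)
Your proposal is correct and follows essentially the same route as the paper: reduce via Maeda--Ogasawara and Lemma~\ref{1} to order dense sublattices of $S(\Sigma)$ and $S(\Omega)$, identify $E\overline{\otimes}F$ inside $S(\Sigma\times\Omega)$ via \cite[Proposition 3.1]{BW:17}, use Lemmas~\ref{2} and~\ref{3} together with the stability of $uo$-convergence under order density to reduce to $S(\Sigma\times\Omega)$, and then verify the open-set criterion of Lemma~\ref{444} by shrinking to a rectangle on which $f$ and $g$ are bounded. The only cosmetic differences are that the paper splits the product as $f_\alpha(g_\beta-g)+(f_\alpha-f)g$ rather than your $(f_\alpha-f)g_\beta+f(g_\beta-g)$, and fixes specific tolerances $\varepsilon/K_1,\varepsilon/K_2$ in place of your free parameters $\delta_1,\delta_2$; the underlying idea and all key ingredients are identical.
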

\begin{proof}
  By the known Maeda-Ogasawara theorem (\cite[Theorem 7.29]{AB1}) and using Lemma \ref{1}, There are two compact Hausdorff extremally disconnected topological spaces $\Sigma$ and $\Omega$ such that $E$ is an order dense vector sublattice of $S(\Sigma)$ and $F$ is an order dense vector sublattice of $S(\Omega)$. Thus, by \cite[Theorem 1.23]{AB1}, $E$ and $F$ are regular sublattices in $S(\Sigma)$ and $S(\Omega)$, respectively. So, by \cite[Theorem 3.2]{GTX:17}, $f_{\alpha}\xrightarrow{uo}f$ in $S(\Sigma)$ and $g_{\beta}\xrightarrow{uo}g$ in $S(\Omega)$. On the other hand, by \cite[Proposition 3.1]{BW:17}, $E\overline{\otimes}F$ can be considered as a vector sublattice of $S(\Sigma\times \Omega)$ generated by the mappings $(h\otimes k)(t,s)=h(t)k(s)$ for each $h\in S(\Sigma)$ and for each $k\in S(\Omega)$. Compatible with \cite[Remark 4.1]{BTr:23} and Lemma \ref{1}, it is enough to show that $f_{\alpha}\otimes g_{\beta}\xrightarrow{uo}f\otimes g$ in $S(\Sigma\times \Omega)$.
  
  Now, we use Lemma \ref{444}. 
 Suppose $W\subseteq \Sigma\times \Omega$ is a non-empty open set and $\varepsilon \in (0,1)$ is arbitrary. There are some non-empty open sets $U\subseteq \Sigma$ and $V\subseteq \Omega$ such that $U\times V\subseteq W$. We can find non-empty open sets $U_1\subseteq U$ and $V_1\subseteq V$ 
 such that $|f|$ is uniformly bounded (say, by $K_1$) on $U_1$ and $|g|$ is uniformly bounded on $V_1$ (namely, by $K_2$). 
 Choose open subsets $U_2\subseteq U_1$ and $V_2\subseteq V_1$ with $|(f_{\alpha}-f)(t)|<\frac{\varepsilon}{K_1}$ for each  $t\in U_2\cap Dom(f_{\alpha})\cap Dom(f)$ and $|(g_{\beta}-g)(s)|<\frac{\varepsilon}{K_2}$ for each $s\in V_2\cap Dom(g_{\beta})\cap Dom (g)$, provided that $\alpha$ and $\beta$ are sufficiently large. 

 
 Then for each $(t,s)\in (U_2\times V_2)\cap((Dom(f_{\alpha})\cap Dom(f))\times (Dom(g_{\beta})\cap Dom(g)))=(U_2\cap Dom(f_{\alpha})\cap Dom(f))\times (V_2\cap Dom(g_{\beta})\cap Dom(g)))$ and for sufficiently large $\alpha$ and $\beta$, we have

\[|(f_{\alpha}\otimes g_{\beta}-f\otimes g)(t,s)|\leq |(f_{\alpha}\otimes(g_{\beta}-g))(t,s)|+|((f_{\alpha}-f)\otimes g)(t,s)|\]

\[=|f_{\alpha}(t)||(g_{\beta}-g)(s)|+|(f_{\alpha}-f)(t)||g(s)|<(\frac{K_1}{K_2}+\frac{K_2}{K_1}+\frac{\varepsilon}{K_1 K_2})\varepsilon.\]
Now, an easy application of Lemma \ref{2} and Lemma \ref{3} convinces us that $f_{\alpha}\otimes g_{\beta}\xrightarrow{uo}f\otimes g$ in $E\overline{\otimes}F$.


\end{proof}
\begin{proposition}\label{302}
Suppose $E$ and $F$ are vector lattices. Assume that $(x_{\alpha})\subseteq E$ is $uo$-null and $(y_{\beta})\subseteq F$ is eventually order bounded. Then, $x_{\alpha}\otimes y_{\beta}\xrightarrow{uo}0$ in $E\overline{\otimes}F$.
\end{proposition}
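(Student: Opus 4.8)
The plan is to follow the same strategy that proved Theorem~\ref{101}, but to exploit the extra hypothesis that $(y_\beta)$ is eventually order bounded in order to simplify the estimate on the product. First I would invoke the Maeda--Ogasawara theorem together with Lemma~\ref{1} to represent $E$ and $F$ as order dense (hence regular) vector sublattices of $S(\Sigma)$ and $S(\Omega)$ for suitable extremally disconnected compact Hausdorff spaces $\Sigma$ and $\Omega$. By \cite[Theorem 3.2]{GTX:17}, regularity transfers the $uo$-nullity of $(x_\alpha)$ from $E$ to $S(\Sigma)$. I would also transfer the hypothesis on $(y_\beta)$: eventual order boundedness in $F$ means there is an index $\beta_0$ and some $u\in F_+$ with $|y_\beta|\le u$ for $\beta\ge\beta_0$, and since $F\subseteq S(\Omega)$ this bound persists in $S(\Omega)$. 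As in Theorem~\ref{101}, by \cite[Proposition 3.1]{BW:17} and \cite[Remark 4.1]{BTr:23} it suffices to establish $x_\alpha\otimes y_\beta\xrightarrow{uo}0$ in $S(\Sigma\times\Omega)$, which I would verify through the pointwise criterion of Lemma~\ref{444}.

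Next I would run the pointwise argument. Fix a non-empty open $W\subseteq\Sigma\times\Omega$ and $\varepsilon>0$; choose non-empty open $U\subseteq\Sigma$, $V\subseteq\Omega$ with $U\times V\subseteq W$. The order bound $u$ on $(y_\beta)$ is itself an element of $S(\Omega)$, so after shrinking $V$ to a non-empty open $V_1$ I may assume $u$ is real and uniformly bounded there, say by a constant $M$; consequently $|y_\beta(s)|\le M$ for all $s\in V_1\cap Dom(y_\beta)$ and all $\beta\ge\beta_0$. Applying Lemma~\ref{444} to the $uo$-null net $(x_\alpha)$ on the open set $U$ with tolerance $\varepsilon/M$, I obtain a non-empty open $U_1\subseteq U$ and an index $\alpha_0$ with $|x_\alpha(t)|\le\varepsilon/M$ for $t\in U_1\cap Dom(x_\alpha)$ and $\alpha\ge\alpha_0$. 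Then on the non-empty open set $U_1\times V_1\subseteq W$, for $\alpha\ge\alpha_0$ and $\beta\ge\beta_0$ and every point in the intersection with the relevant domains, the product satisfies
\[
|(x_\alpha\otimes y_\beta)(t,s)|=|x_\alpha(t)|\,|y_\beta(s)|\le\frac{\varepsilon}{M}\cdot M=\varepsilon .
\]
This is exactly the condition in Lemma~\ref{444}, so $x_\alpha\otimes y_\beta\xrightarrow{uo}0$ in $S(\Sigma\times\Omega)$.

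Finally I would descend back to $E\overline{\otimes}F$. By Lemma~\ref{2} the Fremlin tensor product $S(\Sigma)\overline{\otimes}S(\Omega)$ is order dense in $S(\Sigma\times\Omega)$, and by Lemma~\ref{3} the sublattice $E\overline{\otimes}F$ is order dense in $S(\Sigma)\overline{\otimes}S(\Omega)$; composing, $E\overline{\otimes}F$ is a regular sublattice of $S(\Sigma\times\Omega)$. Invoking \cite[Theorem 3.2]{GTX:17} once more pulls the $uo$-convergence back from the ambient space to $E\overline{\otimes}F$, giving $x_\alpha\otimes y_\beta\xrightarrow{uo}0$ there.

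The main simplification relative to Theorem~\ref{101} is that only one factor need be controlled pointwise: the order bound on $(y_\beta)$ gives a single uniform constant $M$ valid for all large $\beta$ simultaneously, so the cross-term bookkeeping that produced the combined constant $(K_1/K_2+K_2/K_1+\cdots)$ in the theorem collapses to the clean product $\tfrac{\varepsilon}{M}\cdot M$. The point I would watch most carefully is the transfer of ``eventually order bounded'' into a genuine pointwise bound on an open subset of $\Omega$: one must pass through $u$ as an element of $S(\Omega)$, restrict to its domain of finiteness, and shrink $V$ to a set on which $u$ is bounded, since a priori $u$ may take large (or undefined) values elsewhere. Once that localization is handled, the remaining steps are routine applications of the lemmas already established.
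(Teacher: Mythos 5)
Your argument is correct, but it takes a genuinely different route from the paper. The paper's proof of Proposition~\ref{302} is purely lattice-algebraic and never leaves $E\overline{\otimes}F$: given $w\in (E\overline{\otimes}F)_+$, it dominates $w$ by an elementary tensor $x_0\otimes y_0$ using \cite[1A(d)]{Fremlin:74}, then applies the inequality
\[
(|x_{\alpha}|\otimes |y_{\beta}|)\wedge (x_0\otimes y_0)\leq (|x_{\alpha}|\wedge x_0)\otimes (|y_{\beta}|\vee y_0)\leq (|x_{\alpha}|\wedge x_0)\otimes (y\vee y_0),
\]
and finishes by invoking Grobler's separate order continuity of the canonical bimorphism \cite[Corollary 3.4]{Gr:23} on the single order-null net $(|x_{\alpha}|\wedge x_0)$. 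You instead re-run the representation machinery of Theorem~\ref{101}: Maeda--Ogasawara plus Lemma~\ref{1} to embed into $S(\Sigma)$ and $S(\Omega)$, the pointwise criterion of Lemma~\ref{444}, and the order-density transfers of Lemmas~\ref{2} and~\ref{3} together with \cite[Theorem 3.2]{GTX:17}. The paper's route is shorter, avoids the $S(\Sigma)$ apparatus entirely, and exposes the proposition as essentially a one-variable consequence of Grobler's separate continuity; your route is uniform with the proof of Theorem~\ref{101} and makes transparent \emph{why} a single order bound suffices, since only the unbounded factor needs pointwise control. Two small points to tidy in your version: the bound $|y_{\beta}(s)|\leq M$ is a priori only valid on $V_1\cap Dom(y_{\beta})\cap Dom(u)$, and one should note (as the paper implicitly does in Theorem~\ref{101}) that density of $Dom(u)$ and continuity of $y_{\beta}$ extend it to $V_1\cap Dom(y_{\beta})$; and you should record that $M>0$ (e.g.\ take $M=u(s_0)+1$) before dividing by it.
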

\begin{proof}
There exists $y\in F_{+}$ such that $|y_{\beta}|\leq y$ for sufficiently large $\beta$. Suppose $w\in (E\overline{\otimes}F)_{+}$. By  \cite[1A (d)]{Fremlin:74}), there exist $x_0\in E_{+}$ and $y_0\in F_{+}$ with $w\leq x_0\otimes y_0$. Now, we use the following inequality.
\[|x_{\alpha}\otimes y_{\beta}|\wedge w\leq (|x_{\alpha}|\otimes |y_{\beta}|)\wedge (x_0\otimes y_0)\]
\[\leq (|x_{\alpha}|\wedge x_0)\otimes (|y_{\beta}|\vee y_0)\leq (|x_{\alpha}|\wedge x_0)\otimes (y\vee y_0).\]
Now, by \cite[Corollary 3.4]{Gr:23}, we conclude that $x_{\alpha}\otimes y_{\beta}\xrightarrow{uo}0$, as claimed.
\end{proof}
\begin{remark}
Note that order bounded assumption is essential in Proposition \ref{302} and can not be removed. It is know that in $L_p(\mu)$-spaces, for sequences, $uo$-convergence and almost everywhere convergence agree. Put $E=L^2(\Bbb R)$, $f_n(x)=\frac{x}{n}$ and $g_n(x)=n$. It is easy to see that $(f_n)$ is $uo$-null and $(g_n)$ is not order bounded. Moreover, $(f_n\otimes g_n)_{n}$ is the identity mapping which is not pointwise convergent to zero, certainly. 
\end{remark}
It can be easily verified that if a net $(x_{\alpha})\subseteq E$ and a net $(y_{\beta})\subseteq F$ are eventually order bounded, then, the net $(x_{\alpha}\otimes y_{\beta})$ is also eventually order bounded in $E\overline{\otimes}F$. So, we have the following fact that is an extension of \cite[Corollary 3.4]{Gr:23}, as well.
\begin{corollary}
Suppose $E$ and $F$ are vector lattices. Assume that $(x_{\alpha})\subseteq E$ is order null and $(y_{\beta})\subseteq F$ is also order null. Then, $x_{\alpha}\otimes y_{\beta}\xrightarrow{o}0$ in $E\overline{\otimes}F$.
\end{corollary}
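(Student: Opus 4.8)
The plan is to obtain this as a direct consequence of Proposition~\ref{302}, supplemented by two elementary facts about order convergence. Recall that order convergence always implies unbounded order convergence, and that an order null net is automatically eventually order bounded: if $x_{\alpha}\xrightarrow{o}0$ is witnessed by a net $u_{\gamma}\downarrow 0$ with $|x_{\alpha}|\le u_{\gamma}$ for $\alpha\ge\alpha_0(\gamma)$, then fixing a single index $\gamma_0$ yields $|x_{\alpha}|\le u_{\gamma_0}$ for all $\alpha\ge\alpha_0(\gamma_0)$, so the net is dominated by $u_{\gamma_0}$ on its tail.

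First I would feed the two given nets into Proposition~\ref{302}. Since $(x_{\alpha})$ is order null it is in particular $uo$-null in $E$; since $(y_{\beta})$ is order null it is eventually order bounded in $F$. These are exactly the hypotheses of Proposition~\ref{302}, which therefore yields $x_{\alpha}\otimes y_{\beta}\xrightarrow{uo}0$ in $E\overline{\otimes}F$.

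It then remains to sharpen this $uo$-convergence to order convergence. For this I would observe that both $(x_{\alpha})$ and $(y_{\beta})$, being order null, are eventually order bounded, so by the observation recorded just before this corollary the tensor net $(x_{\alpha}\otimes y_{\beta})$ is eventually order bounded in $E\overline{\otimes}F$. Because $uo$-convergence and order convergence agree on (eventually) order bounded nets, the $uo$-null net $(x_{\alpha}\otimes y_{\beta})$ is order null, which is the assertion.

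Since the proof is essentially an assembly of results already in hand, the only delicate point is the passage from ``order bounded'' to ``eventually order bounded'' in the coincidence of the two convergences: the preliminaries state this coincidence for order bounded nets, and one must note that it survives under mere eventual boundedness, as both convergences are tail properties. I expect this minor bookkeeping to be the main, and rather modest, obstacle.
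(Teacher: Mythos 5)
Your proposal is correct and follows exactly the route the paper intends: the remark preceding the corollary (eventual order boundedness of the tensor net) combined with Proposition~\ref{302} and the coincidence of $uo$- and order convergence for (eventually) order bounded nets. The paper leaves this assembly implicit, and your handling of the ``eventually'' versus ``order bounded'' tail issue is the right minor point to flag.
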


{\bf Acknowledgments}. Most of the work on this paper was done during a visit of the author to University of Alberta in 2023. The author would like to thank Professor Vladimir Troitsky so much for great hospitality during his visit, as well. Thanks is also due to Eugene Bilokopytov for many useful discussions.

\end{document}